\documentclass[12pt]{article}

\usepackage{amsmath,amsthm,amsfonts}
\usepackage{epsfig}
\usepackage[mathscr]{eucal}
\usepackage{amssymb,latexsym}

\renewcommand{\baselinestretch}{1.5}

\setlength{\oddsidemargin}{0.1in} \setlength{\evensidemargin}{0.1in}
\setlength{\topmargin}{-0.1in} \setlength{\headheight}{0in}
\setlength{\headsep}{0in} \setlength{\textheight}{9.2in}
\setlength{\textwidth}{6.2in}

\newtheorem{theorem}{Theorem}

\newtheorem{corollary}{Corollary}

\newcommand{\keywords}[1]
{\begin{center}
\begin{minipage}{315.83pt}
\small
We obtain bivariate forms of Gumbel's, Fr\'echet's and Chung's linear inequalities for $P(S\ge u, T\ge v)$ in terms of the bivariate binomial moments $\{S_{i,j}\}$, $1\le i\le k, 1\le j\le l$ of the joint distribution of $(S,T)$. At $u=v=1$, the Gumbel and Fr\'echet bounds improve monotonically with non-decreasing $(k,l)$. The method of proof uses combinatorial identities, and reveals a multiplicative structure before taking expectation over sample points.

\noindent \emph{Keywords:}~{bivariate binomial moments, Gumbel's inequality, combinatorial identity, Bonferroni-type inequalities}
\end{minipage}
\end{center}
\normalsize }

 \baselineskip=15truept

\title{Bivariate Binomial Moments and Bonferroni-type Inequalities\footnote{To appear in {\it Methodology and Computing in Applied Probability.}} }

\renewcommand{\baselinestretch}{0.90}

\author{Qin Ding \footnote{ International Exchange Student, University of Sydney, 2015. {\it viviandqdq@hotmail.com }}\\
and \\
Eugene Seneta \footnote{\it eseneta@maths.usyd.edu.au} \\School of Mathematics and Statistics, FO7\\
 University of Sydney, NSW 2006, Australia\\
}

\date{}

\begin{document}

\maketitle

\renewcommand{\baselinestretch}{1.5}

\begin{abstract}

\end{abstract}

\keywords

\section{Introduction}\label{intro}

The paper of Hoppe and Seneta $(2012)$ gives  a simplified treatment of several well-known, and some less well-known bounds on the probability of a union of events. The treatment  uses  binomial moments of a general non-negative integer-valued random  variable $T$ on sample space $\{0,1,2, \cdots, n \}$ to derive inequalities for  $P(T\geq v)$.  Gumbel's Identity provides the link to the events setting, where $T$ is the number out of $n$ events which occur. 

 The present paper extends the univariate methodology of Hoppe and Seneta $(2012)$ to derive extensions to a bivariate setting of the nature of such inequalities, in particular, of Fr\'echet, Gumbel and Chung, and of their special univariate properties.  The focus is identities and inequalities for $P(S \geq u, T \geq v)$ for a pair of non-negative integer-valued  random variables $(S,T)$ on $\{0,1,2,\cdots, m \} \times \{1,2, \cdots, n \}$.

Suppose $\{A_1,A_2, \cdots, A_m\}$ are arbitrary events in a probability space $(\Omega, B, P)$, and

\begin{equation}\label{old1}
S_{k}=\sum_{i\in I_{k,m}}P(A_{i_1},A_{i_2}, ...,A_{i_k}), k\ge 1,
\end{equation}
with  $S_{0}=1$ by definition, where the set $I_{k,m}$ consists of all k-tuples $i$=$\{i_1,i_2, \cdots ,i_k\}$, where $1\le i_1< i_2< \cdots<i_k\le m$.
The quantities (\ref{old1}) are called (univariate) Bonferroni Sums. If $S(\omega)=\# \{j:1\le j\le m, \omega \in A_j\}$ is the number of events $A_1$, $A_2$,..., $A_m$ which occur at a sample point $\omega$, {\it Gumbel's Identity} (Gumbel, 1936,1937) expresses $S_{k}$ as the k-th binomial moment of the random variables $S$:
\begin{equation}\label{old2}
S_k = E{{S}\choose k},
\end{equation}

Approaches to the use of Bonferroni sums (\ref{old1}) for equalities and inequalities on $P(S = u)$ and $P(S\geq u)$  have from the earliest times (see for example Fr\'echet (1940), and Galambos and Simonelli (1996)), been imbedded to a greater or lesser extent, in the ``events" setting, using in this context  manipulations of probabilities of unions, intersections and complements of events.

The total separation of expression of equalities and inequalities in terms of binomial moments using identities in binomial coefficients,  from the  subsequent application to the ``events" setting and expression in terms of Bonferroni sums in that setting, seems  arguably a more foundational and direct approach.  The derivation of known univariate inequalities, both linear and quadratic, in this manner is presented in Hoppe and Seneta (2012).  For example the inequality derived  in Hoppe and Seneta $(2012)$, \S3.4,

\begin{equation}\label{old3}
P(S\ge 1) \le \frac{ E{\big\{}{m\choose k} - {{m-S}\choose k}{\big\}}}  
{ {{m-1}\choose {k-1}} }
= \frac{ {m\choose k} - {\overline S}_k}  
{ {{m-1}\choose {k-1}} } , \, k\ge 1
\end{equation}
is the expression in terms of binomial moments of Gumbel's $(1936)$ Inequality. Here $${{\overline S}_k} =E{{m-S}\choose k}$$  devolves  to $\sum_{i\in I_{k,m}}P({\bar A}_{i_1},{\bar A}_{i_2}, \cdots,{\bar A}_{i_k})$ in the ``events"  setting, and hence the numerator of (\ref{old3}) to $\sum_{i\in I_{k,m}}P(A_{i_1}\cup A_{i_2} \cup \cdots \cup A_{i_k})$.

The {\it bivariate analogue of Gumbel's Identity} is 
 \begin{equation}\label{old4}
S_{k,l}={E{{S}\choose k}{{T}\choose l}},
\end{equation}
where ``T" is the counting random variate for the events set $B_1,B_2, \cdots,B_n$ and

\begin{equation}\label{old5}
S_{k,l}= \Sigma' P(A_{i_1},A_{i_2}, \cdots, A_{i_k};B_{j_1},B_{j_2}, \cdots, B_{j_l}),
\end{equation}
where $\{A_1,A_2,\cdots,A_m\}$, $\{B_1,B_2,\cdots,B_n\}$ are two sets of events in a probability space $(\Omega, B, P)$, the summation $\Sigma'$ is over the index set $\{(i_1,i_2,\cdots,i_k;j_1,j_2,\cdots,j_l), 1\le i_1< i_2<\cdots<i_k\le m, 1\le j_1< j_2<\cdots<j_l\le n\}$.

A general form of Gumbel's Identity for any finite number of counting random variables $\{S, T, \cdots \}$ is proved by the method of Indicators in Galambos and Simonelli $(1996)$, Chapter \rm{V}, Section \rm{VI}, so we have (\ref{old4}) to hand, in application of our bivariate binomial moment theory, developed in terms of $S_{k,l}$ for {\it general} random variables $(S,T)$, to the ``two sets" context.

 Our ``events-free" methodology and representation in terms of a bivariate distribution $(S,T)$ is not more ``general" than  the ``events" setting, since given such a bivariate distribution it is always possible to construct a probability space and event sets, $\{A_1, A_2, \cdots, A_m\}, \{B_1, B_2,\cdots. B_n\}$ in it, for which $S,T$ are the ``counting" variables. 

The ``events" setting is primary in practical applications  in calculating {\it values}  of  the bivariate binomial moments $S_{i,j}$ for small $(i,j)$  via Bonferroni sums using the the generalized Gumbel Identity, when the bivariate distribution of $S,T$ is not known.

Our  methodology via combinatorial identities provides relatively simple proofs, and indicates direction of extension to more than two dimensions. 

In the bivariate ``events" setting, the initial fundamental results, including (\ref{old4}), for $P(S=u, T=v),\, P(S \geq u, T \geq v)$  are in the note of Meyer (1969), which leans on an earlier such result in Fr\'echet (1940), which itself has a convoluted history (see our \S  2). The work of  Chen and Seneta (1996) (2000) (see also the recent book  of Chen (2014))  uses to a large extent combinatorial identities, but nevertheless  relies on Meyer's (1969) results to express the bounds in terms of the Bonferroni-type sums (\ref{old5}).

Thus our first task in Section 2 of this paper is to generalize in a self-contained fashion the results of Meyer (1969)  for a pair of non-negative integer-valued  random variables $(S,T)$ on $\{0,1,2,\cdots, m \} \times \{0,1,2, \cdots, n \}$, using (\ref{old4}) as the {\it definition} of the $(k,l)$ binomial moment of the distribution. We shall need some of these results in the following sections.  

\section{Basic Identities in Terms of Bivariate Binomial Moments}

\subsection{ The Fundamental Theorem.}

\begin{theorem}
If $(S,T)$ is a pair of random variables on $\{0,1, \cdots,$m$\} \times \{0,1, \cdots,$n$\}$, write $P_{[u,v]}=P(S=u, T=v)$. Then for $m,n\ge u,v\ge 0$,
\begin{equation}\label{old6}
P_{[u,v]}=P(S=u, T=v)=\sum^{m+n}_{t=u+v} \sum_{i+j=t}(-1)^{t-(u+v)}{{i}\choose u}{{j}\choose v}S_{i,j},
\end{equation}
where
\begin{equation} \label{old7}
{S_{i,j}=E{{S}\choose i}{{T}\choose j}},  m,n\ge i,j\ge 0.
\end{equation}
\end{theorem}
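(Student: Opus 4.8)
The plan is to substitute the definition (\ref{old7}) of the binomial moments into the right-hand side of (\ref{old6}) and invoke linearity of expectation to move the finite sum inside. Since $S_{i,j}=E\binom{S}{i}\binom{T}{j}$, the right-hand side of (\ref{old6}) equals
\[
E\Bigg[\sum_{t=u+v}^{m+n}\sum_{i+j=t}(-1)^{t-(u+v)}\binom{i}{u}\binom{j}{v}\binom{S}{i}\binom{T}{j}\Bigg].
\]
The whole problem then reduces to a deterministic, pointwise identity: I claim that for every pair of integers $s,r$ with $0\le s\le m$, $0\le r\le n$,
\[
\sum_{t=u+v}^{m+n}\sum_{i+j=t}(-1)^{t-(u+v)}\binom{i}{u}\binom{j}{v}\binom{s}{i}\binom{r}{j}=\mathbf{1}\{s=u\}\,\mathbf{1}\{r=v\}.
\]
Granting this, I apply it with $s=S$, $r=T$ and take expectations: the left side becomes the bracketed expression above, and the right side becomes $P(S=u,T=v)=P_{[u,v]}$, which is exactly (\ref{old6}).

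To prove the pointwise identity I would first exhibit the multiplicative structure alluded to in the abstract. Writing $t=i+j$ gives $(-1)^{t-(u+v)}=(-1)^{i-u}(-1)^{j-v}$, so the summand factors into an $i$-part times a $j$-part. Next I would re-index: as $t$ ranges over $u+v,\dots,m+n$ and $(i,j)$ over all splittings with $i+j=t$, each admissible pair is counted exactly once, so the double sum is an independent product $\sum_i\sum_j$. The terms with $i>s$ or $j>r$ contribute nothing because $\binom{s}{i}=0$ or $\binom{r}{j}=0$; in particular no harm comes from pairs with $i>m$ or $j>n$, which we may simply include with vanishing weight. Restricting to $u\le i$ and $v\le j$ (below which $\binom{i}{u}$ or $\binom{j}{v}$ vanishes), the sum therefore factors as
\[
\Bigg(\sum_{i=u}^{m}(-1)^{i-u}\binom{i}{u}\binom{s}{i}\Bigg)\Bigg(\sum_{j=v}^{n}(-1)^{j-v}\binom{j}{v}\binom{r}{j}\Bigg).
\]

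It then remains to evaluate each univariate factor. For $s\ge u$ I would apply the subset-of-a-subset identity $\binom{i}{u}\binom{s}{i}=\binom{s}{u}\binom{s-u}{i-u}$, pull $\binom{s}{u}$ out of the sum, substitute $p=i-u$, and recognize $\sum_{p\ge0}(-1)^{p}\binom{s-u}{p}=(1-1)^{s-u}$, which equals $1$ for $s=u$ and $0$ for $s>u$; when $s<u$ every term already vanishes since $\binom{s}{i}=0$ for $i\ge u$. Hence the first factor equals $\mathbf{1}\{s=u\}$ and, by the identical computation, the second equals $\mathbf{1}\{r=v\}$, giving the claimed product.

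The calculations are routine once the structure is in place; the one step requiring genuine care is the re-indexing from the $t$-indexed double sum to the independent product of two sums. Specifically, I must verify that ranging over $t$ and over splittings $i+j=t$ reproduces each pair $(i,j)$ exactly once and that extending the index ranges to include out-of-range pairs introduces only vanishing terms, so that the factorization displayed above is exact rather than merely formal. Once that bookkeeping is settled, the proof is immediate, and the same factorization is what makes the bivariate inversion inherit the clean product form of its univariate counterpart.
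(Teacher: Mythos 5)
Your proof is correct, but it follows a genuinely different route from the paper's. The paper argues through the bivariate probability generating function: it expands $P(1+t,1+s)$ to identify its coefficients as the $S_{i,j}$, then writes $P(t,s)=\sum_{i,j}(t-1)^i(s-1)^jS_{i,j}$ and compares coefficients of $t^us^v$. You instead prove the deterministic sample-point identity
\begin{equation*}
\Biggl(\sum_{i=u}^{m}(-1)^{i-u}\binom{i}{u}\binom{s}{i}\Biggr)\Biggl(\sum_{j=v}^{n}(-1)^{j-v}\binom{j}{v}\binom{r}{j}\Biggr)
=\mathbf{1}\{s=u\}\,\mathbf{1}\{r=v\},
\end{equation*}
via the subset-of-a-subset identity and the binomial theorem, and then take expectations; your re-indexing bookkeeping (terms with $i>s$, $j>r$, or with $i>m$, $j>n$, all vanish) is handled correctly. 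In effect you establish the paper's Corollary, equation (\ref{old11}), first and deduce Theorem 1 from it, whereas the paper deduces that Corollary from the theorem. Your argument is also the natural completion of the Sibuya/Galambos--Xu argument sketched after the Corollary: that sketch treats only $u=v=0$ via $(1-1)^{S}(1-1)^{T}$ and then appeals to an external ``reduction method'' for general $(u,v)$, while your factorized identity covers general $(u,v)$ in one stroke. What your route buys: it is self-contained, works at the level of sample points, exemplifies exactly the ``multiplicative structure before taking expectation'' methodology the paper advertises and uses from Theorem 2 onward, and extends mechanically to more than two dimensions. What the paper's route buys: brevity and a purely algebraic derivation in the Riordan generating-function tradition that the authors explicitly wished to record.
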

\begin{proof}
Write for the ordinary bivariate probability generating function:
\begin{equation}\label{old8}
P(t,s)=\sum^{m}_{u=0} \sum^{n}_{v=0}t^{u}s^{v}P_{[u,v]}
\end{equation}
Then, eventually using (\ref{old7}): 
\begin{eqnarray*}
P(1+t,1+s) &=&{\sum^{m}_{u=0} \sum^{n}_{v=0}(1+t)^{u}(1+s)^{v}P_{[u,v]}} \\
&=&\sum^{m}_{u=0} \sum^{n}_{v=0}\sum^{u}_{i=0} \sum^{v}_{j=0} {{u}\choose i}{{v}\choose j}t^{i}s^{j}P_{[u,v]} \\
&=&\sum^{m}_{i=0} \sum^{n}_{j=0}\sum^{m}_{u=i} \sum^{n}_{v=j} {{u}\choose i}{{v}\choose j}t^{i}s^{j}P_{[u,v]} \\
&=&\sum^{m}_{i=0} \sum^{n}_{j=0}t^{i}s^{j}S_{i,j}, \end{eqnarray*} so that
\begin{eqnarray}
P(t,s) &=&{\sum^{m}_{i=0} \sum^{n}_{j=0}(t-1)^{i}(s-1)^{j}S_{i,j}}\nonumber\\
&=&\sum^{m}_{i=0} \sum^{n}_{j=0}\sum^{i}_{u=0}\sum^{j}_{v=0}{{i}\choose u}t^{u}(-1)^{i-u}{{j}\choose v}s^{v}(-1)^{j-v}S_{i,j}\nonumber \\ 
&=&\sum^{m}_{u=0} \sum^{n}_{v=0}{\sum^{m}_{i=u}\sum^{n}_{j=v}{{i}\choose u}{{j}\choose v}(-1)^{i+j-(u+v)}S_{i,j}}t^{u}s^{v}.\label{old9} 
\end{eqnarray}
By comparing the coefficients of $t^us^v$ for $P(t,s)$ in (\ref{old8})  and (\ref{old9}) , we obtain (\ref{old6}) for $P_{[u,v]}.$ 
\end{proof}

This expression (\ref{old6}) may also usefully be written as
\begin{equation}\label{old10}
P_{[u,v]}=\sum^{m}_{i=u}\sum^{n}_{j=v}{{i}\choose u}{{j}\choose v}(-1)^{i+j-(u+v)}S_{i,j},
\end{equation}
so that, by (\ref{old7}):
\begin{corollary}
From (\ref{old10}) for $m,n\ge u,v\ge 0$,
\begin{equation}\label{old11}
P_{[u,v]}=E{\bigg\{}{\bigg (} \sum^{m}_{i=u}{{{i}\choose u}(-1)^{i-u}}{S \choose i}{\bigg )} {\bigg(}\sum^{n}_{j=v}{{{j}\choose v}(-1)^{j-v}}{T\choose j}{\bigg )}{\bigg\}}.
\end{equation}
\end{corollary}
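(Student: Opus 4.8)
The plan is to obtain (\ref{old11}) as a direct consequence of the representation (\ref{old10}) together with the definition (\ref{old7}) of the bivariate binomial moments. First I would substitute $S_{i,j}=E{S\choose i}{T\choose j}$ into the right-hand side of (\ref{old10}), producing a finite double sum whose generic summand is ${i\choose u}{j\choose v}(-1)^{i+j-(u+v)}\,E\{{S\choose i}{T\choose j}\}$.

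The key observation is that the sign factor separates, $(-1)^{i+j-(u+v)}=(-1)^{i-u}(-1)^{j-v}$, so that each summand is the product of a quantity depending only on the index $i$, namely ${i\choose u}(-1)^{i-u}{S\choose i}$, and one depending only on $j$, namely ${j\choose v}(-1)^{j-v}{T\choose j}$. Because the summation ranges $u\le i\le m$ and $v\le j\le n$ are independent of one another, the distributive law lets me rewrite the double sum as the single product $\bigl(\sum_{i=u}^{m}\cdots\bigr)\bigl(\sum_{j=v}^{n}\cdots\bigr)$ of two one-dimensional sums.

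Finally, since every sum in sight is finite, linearity of expectation permits interchanging $E$ with the summations; pulling the expectation operator back outside the factored product then yields exactly the bracketed expression in (\ref{old11}).

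There is really no obstacle to overcome here: the statement is an immediate corollary of (\ref{old10}) and (\ref{old7}). The only steps requiring (entirely routine) justification are the splitting of the sign, the factorization of a finite double sum over a product index set into a product of two sums, and the interchange of expectation with finite summation — none of which presents any difficulty.
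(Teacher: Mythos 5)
Your proposal is correct and is essentially the paper's own (implicit) argument: the Corollary is stated there with no justification beyond ``by (\ref{old7})'', and your chain of substitution, sign-splitting, factorization of the double sum, and linearity of expectation is exactly what is being left to the reader. The one point to state in the right order is that the double sum with summands ${i\choose u}{j\choose v}(-1)^{i+j-(u+v)}E\{{S\choose i}{T\choose j}\}$ is a sum of numbers and cannot itself be factored into the product of the two random sums (since $S$ and $T$ need not be independent, $E\{{S\choose i}{T\choose j}\}$ does not split); one must first use linearity to write the whole expression as $E$ of a double sum of random variables and only then factor pointwise inside the expectation --- which is what your closing remark about ``pulling the expectation operator back outside the factored product'' effectively accomplishes.
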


Theorem 1 expresses the bivariate distribution in terms of the bivariate moments $S_{i,j}$. 

Our equation (\ref{old6})  is  equation $(1)$ in Meyer's $(1969)$ pioneering paper. However, his equation $(1)$  is  in terms of the bivariate set-specific quantities (our (\ref{old5})  above), and refers to Fr\'echet's $(1940)$ booklet for proof in that set-specific context. Fr\'echet $(1940)$, pp.50-52, obtains the expression using ``the second theorem of Broderick", from Broderick $(1937)$, who develops his theory in the ``symbolic" fashion akin to Bonferroni's (1936). This symbolic fashion is later present in Chapter 2, Section 6, of the classic book of Riordan $(1958),$  but in terms only of univariate binomial moments. 

We thought it time to give a simple, direct and non-symbolic proof as above, which is still in the spirit of Riordan $(1958)$, for completeness,  since Theorem 1 is a foundation for Theorem 2 below,  whose proof  is extremely condensed in Meyer (1969).

However, the following argument for Theorem 1, due partly  to Sibuya, and to Galambos and Xu (1995) where Sibuya is acknowledged, is notable because of its development in terms of combinatorial identities: 

Since 
\begin{equation}\nonumber
(1-1)^S=\sum_{i=0}^S (-1)^i {S\choose i}=\sum_{i=0}^m (-1)^i {S\choose i},
\end{equation}
\begin{equation}\label{star1}
E((1-1)^S)=(1-1)^0 P(S=0)=\sum_{i=0}^m (-1)^i E{S\choose i},
\end{equation}
where $(1-1)^0$ is interpreted as 1; and similarly for $P(T=0)$.

Now, since $(1-1)^{S+T}=(1-1)^S(1-1)^T$,
\begin{equation}\nonumber
\begin{split}
(-1)^t {S+T \choose t}&=\sum_{i=0}^t (-1)^i {S\choose i}(-1)^{t-i} {T\choose t-i}\\
&=\sum_{i+j=t} (-1)^t {S\choose i} {T\choose j},
\end{split}
\end{equation}
so
\begin{equation}\nonumber
\sum^{m+n}_{t=0} (-1)^t {S+T \choose t}=\sum^{m+n}_{t=0}\sum_{i+j=t} (-1)^t {S\choose i} {T\choose j},
\end{equation}
and taking expectation and using (\ref{star1}) with $S+T$ replacing $S$, we have
\begin{equation}\nonumber
P(S+T=0)=\sum^{m+n}_{t=0}\sum_{i+j=t} (-1)^t S_{i,j}.
\end{equation}
Since $P(S+T=0)=P(S=0, T=0)$, we have (\ref{old6}) of our Theorem 1, at $u=v=0$. Theorem 1 for general $u$ and $v$ follows from the special case $u=v=0$ by the reduction method encapsulated in Corollary $2.1$ of Galambos and Xu $(1995)$, see also Galambos and Simonelli $(1996)$.

Once univariate {\it inequalities} are available for $P(S+T=0)=P(S=0, T=0)$, the reduction method leads to inequalities for the general case $P(S=u, T=v)$. The reduction method applied to the classic univariate Bonferroni Inequalities leads to the inequalities of our Theorem 3, which are Meyer's version of extension to the bivariate case. 

\subsection {Theorems 2 and 3.}
The Corollary above, and  first part of the proof of Theorem2 below initiate  our methodology of working in terms of identities and inequalities  in terms of combinatorial quantities  applied to sample points of the pair of random variables $\{S,T\}$ before  using the linear property of expectation to express in terms of bivariate moments of their joint distribution.

Our method of proof is akin to, but distinct from, the method of indicator  functions which uses products of indicator functions before taking expectation. The methodof indicatore functions is concisely exposited at the beginning of Galambos and Xu (1995).

\begin{theorem}
Put
\begin{equation*}
P_{(u,v)}=P(S\ge u, T\ge v),
\end{equation*}
then:
\begin{equation}\label{old13}
P_{(u,v)}=\sum^{m+n}_{t=u+v} \sum_{i+j=t}(-1)^{i+j-(u+v)}{{i-1}\choose u-1}{{j-1}\choose v-1}S_{i,j},
\end{equation}
where $S_{i,j}$ is given by (\ref{old7}), and, further
\begin{equation}\label{old14}
S_{i,j}=\sum^{m}_{u=i} \sum^{n}_{v=j}{{u-1}\choose i-1}{{v-1}\choose j-1}P_{(u,v)}.
\end{equation}
\end{theorem}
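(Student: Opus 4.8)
The plan is to obtain both (\ref{old14}) and (\ref{old13}) from combinatorial identities that hold pointwise in the integer values of $S$ and $T$, each factoring as a product over the two coordinates, and only afterwards to take expectations using $E\binom{S}{i}\binom{T}{j}=S_{i,j}$. This keeps the argument in the spirit announced earlier in this section: a multiplicative structure at the sample point is exploited before the linear operation of expectation is applied. Conceptually, (\ref{old13}) and (\ref{old14}) assert that the triangular coefficient arrays $\binom{u-1}{i-1}\binom{v-1}{j-1}$ and $(-1)^{i+j-(u+v)}\binom{i-1}{u-1}\binom{j-1}{v-1}$ are mutual inverses, so one could be deduced from the other by inversion; but since each admits a short direct proof, I would establish them separately.

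For (\ref{old14}) I would begin from the right-hand side and expand each tail probability into point masses, $P_{(u,v)}=\sum_{s\ge u}\sum_{t\ge v}P_{[s,t]}$ in the notation of Theorem 1. Substituting this and interchanging the (finite) order of summation, the coefficient multiplying $P_{[s,t]}$ factors as $\big(\sum_{u=i}^{s}\binom{u-1}{i-1}\big)\big(\sum_{v=j}^{t}\binom{v-1}{j-1}\big)$, the upper limits being $s$ and $t$ because $P_{[s,t]}$ is picked up only when $s\ge u$ and $t\ge v$. The hockey-stick identity $\sum_{u=i}^{s}\binom{u-1}{i-1}=\binom{s}{i}$, applied once in each coordinate, collapses this product to $\binom{s}{i}\binom{t}{j}$, whence the whole sum equals $\sum_{s,t}\binom{s}{i}\binom{t}{j}P_{[s,t]}=E\binom{S}{i}\binom{T}{j}=S_{i,j}$. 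This ``direct'' direction should be routine once the hockey-stick step is in place.

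For (\ref{old13}) I would first prove the univariate pointwise identity, valid for every admissible value of $S$ and every $u\ge 1$,
\begin{equation*}
\mathbf{1}\{S\ge u\}=\sum_{i=u}^{m}(-1)^{i-u}\binom{i-1}{u-1}\binom{S}{i},
\end{equation*}
together with the analogous identity in $T$. The cleanest route is to sum the pointwise form of (\ref{old10}), namely $\mathbf{1}\{S=u'\}=\sum_{i=u'}^{m}(-1)^{i-u'}\binom{i}{u'}\binom{S}{i}$ (itself a consequence of the subset-of-subset identity and $(1-1)^{S-u'}$), over $u'\ge u$, and then to evaluate the resulting inner alternating sum by the partial-sum identity $\sum_{k=0}^{r}(-1)^{k}\binom{n}{k}=(-1)^{r}\binom{n-1}{r}$, which turns $\sum_{u'=u}^{i}(-1)^{u'}\binom{i}{u'}$ into $(-1)^{u}\binom{i-1}{u-1}$. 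Multiplying the $S$-identity by the $T$-identity gives $\mathbf{1}\{S\ge u,T\ge v\}$ as the double sum $\sum_{i=u}^{m}\sum_{j=v}^{n}(-1)^{(i-u)+(j-v)}\binom{i-1}{u-1}\binom{j-1}{v-1}\binom{S}{i}\binom{T}{j}$; taking expectation and reindexing by $t=i+j$ then yields exactly (\ref{old13}).

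The main obstacle is the combinatorial inversion at the heart of (\ref{old13}): correctly evaluating the partial alternating binomial sum and tracking the boundary cases (empty ranges when the value of $S$ falls below $u$, and the conventions for $\binom{i-1}{u-1}$ at the lower endpoints), since this is precisely what converts the ``point-mass'' coefficients $\binom{i}{u'}$ into the ``tail'' coefficients $\binom{i-1}{u-1}$. Once that identity is secured, the remaining work --- interchanging finite sums, invoking the hockey-stick identity for (\ref{old14}), multiplying the two univariate identities, and relabelling the double sum into the diagonal form $\sum_{t}\sum_{i+j=t}$ --- is bookkeeping.
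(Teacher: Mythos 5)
Your proposal is correct and follows essentially the same route as the paper: the paper likewise proves (\ref{old13}) by summing the point-mass expansion (Corollary 1, equation (\ref{old11})) over the tail and collapsing the inner alternating sum with Identity 2 into the coefficients $(-1)^{i-u}\binom{i-1}{u-1}$, exploiting exactly the product structure across the two coordinates before taking expectation, just as you do with indicator functions. The only cosmetic difference is in (\ref{old14}), where you expand $P_{(u,v)}$ into point masses and apply the hockey-stick identity (the paper's Identity 3), while the paper rewrites $\binom{u-1}{i-1}$ via Pascal's Identity 1 and telescopes against $P_{(u,v)}$ --- two transposed versions of the same summation by parts.
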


\begin{proof}
From(\ref{old11}) :
\begin{eqnarray*}
P_{(u,v)}&=&\sum^{m}_{y=u}\sum^{n}_{z=v}P_{[y,z]}\\
&=& \sum^{m}_{y=u} \sum^{n}_{z=v}E{\bigg\{}{\bigg (} \sum^{m}_{i=y}{{{i}\choose y}(-1)^{i-y}}{S \choose i}{\bigg )} {\bigg(}\sum^{n}_{j=z}{{{j}\choose z}(-1)^{j-z}}{T\choose j}{\bigg )}{\bigg\}}\\
&=& E{\bigg\{}{\bigg (}\sum^{m}_{y=u} \sum^{m}_{i=y}{{{i}\choose y}(-1)^{i-y}}{S \choose i}{\bigg )} {\bigg(} \sum^{n}_{z=v}\sum^{n}_{j=z}{{{j}\choose z}(-1)^{j-z}}{T\choose j}{\bigg )}{\bigg\}}\\
&=&
E{\bigg\{}{\bigg (}\sum^{m}_{i=u} (-1)^{i-u} {{i-1}\choose {u-1}}{S \choose i}{\bigg )} {\bigg(} \sum^{n}_{j=v} (-1)^{j-v} {{j-1}\choose {v-1}}{T\choose j}{\bigg )}{\bigg\}}\end{eqnarray*}
where the last line follows from Combinatorial Identity 2 in our Section 6. 

Now, the  right hand side of (\ref{old14}), using ${d \choose k} = {{d-1}\choose k}  + {{d-1}\choose {k-1}} $ (Identity 1,  Section 6),

\begin{eqnarray*}
\sum^{m}_{u=i} \sum^{n}_{v=j}{{u-1}\choose i-1}{{v-1}\choose j-1}P_{(u,v)} &=&
\sum^{m}_{u=i} \sum^{n}_{v=j} {\bigg (} {{u}\choose i}-{{u-1}\choose i} {\bigg )} {\bigg (} {{v}\choose j}-{{v-1}\choose j}{\bigg )} P_{(u,v)} \\
&=&\sum^{m}_{u=i} \sum^{n}_{v=j} {{u}\choose i} {{v}\choose j} P_{[u,v]}=S_{i,j}.\end{eqnarray*} 
\end{proof} 

\begin{theorem}
For any non-negative integer k,

\begin{equation}\label{old15}
\sum^{u+v+2k+1}_{t=u+v}\sum_{i+j=t}g(i,j;t) \le P_{(u,v)} \le \sum^{u+v+2k}_{t=u+v}\sum_{i+j=t}g(i,j;t),
\end{equation}
where $g(i,j;t)=(-1)^{t-(u+v)}{{i-1}\choose u-1}{{j-1}\choose v-1}S_{i,j}$.
\end{theorem}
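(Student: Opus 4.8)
The plan is to prove the inequalities first at the level of a single sample point $\omega$, and only then take expectations, exploiting exactly the multiplicative structure already exposed in the proof of Theorem 2. By (\ref{old7}) each $S_{i,j}$ is the expectation of $\binom{S}{i}\binom{T}{j}$, so by linearity and monotonicity of $E[\cdot]$ it suffices to show, for every realization of $(S,T)$ and every integer $R\ge 0$,
\[ (-1)^{R}\bigl(\Sigma_R-\mathbf{1}\{S\ge u,\,T\ge v\}\bigr)\ge 0,\qquad \Sigma_R:=\sum_{t=u+v}^{u+v+R}\sum_{i+j=t}(-1)^{t-(u+v)}\binom{i-1}{u-1}\binom{j-1}{v-1}\binom{S}{i}\binom{T}{j}. \]
Taking $R=2k$ then gives the upper bound and $R=2k+1$ the lower bound of (\ref{old15}). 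Since $\binom{S}{i}=0$ for $i>S$ and $\binom{T}{j}=0$ for $j>T$, every summand vanishes unless $S\ge u$ and $T\ge v$; so the only case needing work is $S\ge u,\ T\ge v$, where the indicator equals $1$.

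Next I would reorganize the truncated sum. Writing $p=i-u$, $q=j-v$, $A_p:=\binom{u+p-1}{u-1}\binom{S}{u+p}\ge 0$ and $B_q:=\binom{v+q-1}{v-1}\binom{T}{v+q}\ge 0$, the slice of total degree $t=u+v+r$ equals $(-1)^{r}\sum_{p+q=r}A_pB_q$, so $\Sigma_R$ is precisely a \emph{diagonal} (Cauchy-product) partial sum of the two alternating series $\sum_p(-1)^pA_p$ and $\sum_q(-1)^qB_q$. This is where the hard part lies: a Cauchy truncation is not itself an alternating series with monotone terms, so the naive alternating-series bracketing (and any attempt to sandwich $\mathbf{1}$ between consecutive partial sums) fails, because the convolution terms $\sum_{p+q=r}A_pB_q$ need not be monotone in $r$. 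The device I would use to get around this is an \emph{asymmetric resummation} that keeps one factor summed to the end while truncating the other:
\[ \Sigma_R=\sum_{p=0}^{R}(-1)^{p}A_p\,\beta_{R-p},\qquad \beta_Q:=\sum_{q=0}^{Q}(-1)^{q}B_q. \]

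The two ingredients are then the univariate sample-point Bonferroni inequalities, i.e. the single-variable instance of (\ref{old13}) as in Hoppe and Seneta (2012): because $S\ge u$ and $T\ge v$ force the full alternating sums to equal $1$, one has $\alpha_P:=\sum_{p=0}^{P}(-1)^pA_p=1+(-1)^P\gamma_P$ and $\beta_Q=1+(-1)^Q\delta_Q$ with $\gamma_P,\delta_Q\ge 0$ (these single-variable facts can themselves be read off from a one-variable combinatorial identity, keeping everything in the paper's Section~6 style). Substituting $\beta_{R-p}=1+(-1)^{R-p}\delta_{R-p}$ and then $\alpha_R=1+(-1)^R\gamma_R$ collapses the double sum to
\[ \Sigma_R-1=(-1)^{R}\Bigl(\gamma_R+\sum_{p=0}^{R}A_p\,\delta_{R-p}\Bigr), \]
whose bracket is a sum of nonnegative quantities. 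This is exactly the sign required above, so the pointwise inequality holds.

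Finally, I would take $E[\cdot]$ across the pointwise inequality: using (\ref{old7}) the expectation of $\Sigma_R$ is the Bonferroni partial sum $\sum_{t=u+v}^{u+v+R}\sum_{i+j=t}g(i,j;t)$, while $E[\mathbf{1}\{S\ge u,T\ge v\}]=P_{(u,v)}$, and monotonicity of expectation delivers (\ref{old15}) with $R=2k$ (upper) and $R=2k+1$ (lower). In summary, the whole theorem reduces to the one-dimensional Bonferroni inequalities applied twice; the single genuinely nontrivial point is the asymmetric resummation that tames the diagonal Cauchy truncation, and I expect that step to be the main obstacle to write cleanly.
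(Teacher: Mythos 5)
Your proof is correct, but it does not follow the paper's route, because the paper in fact offers no self-contained argument for this theorem: it states that Theorem 3 ``needs no separate proof, since Meyer's (1969) proof is completely appropriate,'' and indicates (end of \S 2.1) that these inequalities arise by applying the classical univariate Bonferroni inequalities to $P(S=0,T=0)$ and then invoking the Galambos--Xu reduction method. What you do instead is a direct sample-point argument in exactly the spirit the paper advertises (Corollary 1, the proof of Theorem 2, and (\ref{old32})) but never actually deploys for this result. Your steps check out: all summands vanish off the event $\{S\ge u,\,T\ge v\}$; the triangular identity $\Sigma_R=\sum_{p=0}^{R}(-1)^{p}A_p\,\beta_{R-p}$ is just a reorganization of the sum over pairs with $p+q\le R$; the substitution then gives $\Sigma_R-1=(-1)^{R}\bigl(\gamma_R+\sum_{p=0}^{R}A_p\,\delta_{R-p}\bigr)$ with a manifestly nonnegative bracket; and monotone linearity of expectation finishes the proof. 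One citation slip is worth fixing: the facts $\gamma_P,\delta_Q\ge 0$ are not instances of the identity (\ref{old13}); they are the univariate Bonferroni \emph{inequalities} (Theorem 3 of Hoppe and Seneta (2012)), applied to the point mass at the realized value, i.e.\ $(-1)^{P}\bigl(\sum_{i=u}^{u+P}(-1)^{i-u}\binom{i-1}{u-1}\binom{S}{i}-1\bigr)\ge 0$ whenever $S\ge u$. This ingredient is true, standard, and not circular (it is one-dimensional, while the statement being proved is bivariate), but it is an inequality, not merely the full-sum identity. Comparing the two routes: yours buys a self-contained proof entirely within the paper's combinatorial-identities-on-sample-points methodology, with the asymmetric resummation as the genuinely new device that tames the diagonal Cauchy truncation, and it visibly extends to more than two counting variables by keeping all factors but one fully summed; the paper's (Meyer's) route is shorter but outsources the work to the reduction method and the events-setting literature.
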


This expresses Bonferroni's Inequalities in our slightly generalized setting of a bivariate distribution of random variables $(S,T)$ and their binomial bivariate moments. This Theorem 3, which generalizes to the bivariate setting Theorem 3 in Hoppe and Seneta (2012),  needs no separate proof, since  Meyer's (1969) proof is completely appropriate. 

The Sobel-Uppuluri-Galambos Inequalities sharpen the Bonferroni Inequalities in the classic single-set context by adding (for the lower bound) and subtracting (for the upper bound) another binomial moment term. These are extended  to the context of distribution of a single general integer random variable $T$ on $\{0,1,\cdots,n\}$ by Hoppe and Seneta $(2012)$, Section 3.2. The extension of analogous bounds in this manner for the joint distribution of two general random variables in terms of $S_{k,l}$ is already contained in Chen and Seneta $(1996) ,(2000).$ Consequently we now proceed to the two-dimensional generalization of the less-known one-dimensional results in Hoppe and Seneta (2012), and do so, as in the one-dimensional case, by using combinatorial methods on sample points.

Our present paper, foreshadowed in Hoppe and Seneta (2012), was also stimulated by the appearance of M\'adi-Nagy and Pr\'ekopa (2015), which itself is partly motivated by that of Galambos and Xu (1995).  We shall say more in our \S 3.5

\section {Generalized Bivariate Fr\'echet and Gumbel Inequalities}

In the generalized univariate setting of Hoppe and Seneta $(2012)$, these inequalities for $P(T\ge 1)$ were expressed in terms of $S_{k}=E{{T}\choose k}$, $k\ge 0$, and ${\overline S_{k}}=E{{n-T}\choose k}.$

For the bivariate setting, recall from (\ref{old4}) that ${S_{k,l}=E{{S}\choose k}{{T}\choose l}}$ is an obvious 
analogue of  $S_{k}=E{{T}\choose k}$.

The bivariate analogue of ${\overline S_{k}}=E{{n-T}\choose k}$ is then
\begin{equation}\label{old17}{\overline S_{k,l}}={{m}\choose k}E{{n-T}\choose l}+{{n}\choose l}E{{m-S}\choose k}-E{{m-S}\choose k}{{n-T}\choose l}.
\end{equation}

To see the reason for this, and it is central to our sequel,  it  is convenient to proceed in terms of the quantity 
\begin{equation} \label{old18}
{\bigg\{} {{m}\choose k}-{{m-S}\choose k} {\bigg\}} {\bigg\{} {{n}\choose l}-{{n-T}\choose l}{\bigg\}}
\end{equation}
so  that, 
imitating the proof of Fr\'echet's univariate Inequality in Hoppe and Seneta (2012), \S3.3:
\begin{eqnarray*}
{{{m}\choose k}{{n}\choose l} - {\overline S_{k,l}} } 
&=&\sum^{m}_{i=0}\sum^{n}_{j=0} {\bigg\{} {{m}\choose k}-{{m-i}\choose k} {\bigg\}} {\bigg\{} {{n}\choose l}-{{n-j}\choose l} {\bigg\}} P_{[i,j]}\\
&\le& \sum^{m}_{i=1}\sum^{n}_{j=1} {{m}\choose k} {{n}\choose l}P_{[i,j]} \\
&=&{{m}\choose k} {{n}\choose l} P(S\ge 1, T\ge 1).\end{eqnarray*}
Thus with the definition (\ref{old17})  of  $ {\overline S_{k,l}}$ we can write down:

\subsection{Fr\'echet's Bivariate  Inequality}
\begin{equation}\label{old19}
P(S\ge 1, T\ge 1) \ge \frac { {{m}\choose k} {{n}\choose l} - {\overline S_{k,l}}}  {{{m}\choose k}{{n}\choose l}}.
\end{equation}

Furthermore we have an ``expectation of  product" form:
\begin{equation}\label{old20}
{{m}\choose k}{{n}\choose l} - {\overline S_{k,l}}= E{\bigg\{} {\bigg\{} {{m}\choose k}-{{m-S}\choose k} {\bigg\}} {\bigg\{} {{n}\choose l}-{{n-T}\choose l}{\bigg\}} {\bigg\}}.
\end{equation}

Expressions of products of linear forms in  binomial coefficients  pervade our subsequent development, and are already evident  in our \S 1, for example in (\ref{old10}). They make it clear how our various bivariate equalities and  inequalities  generalize
from univariate, and allow generalization to multivariate. 

Before  proceeding we note that in (\ref{old19}) above, and  all the bounds below involving ${\overline S_{k,l}},$ can be expressed in the customary way in terms of  linear combinations of the bivariate moments $S_{i,j}$ on account of (\ref{old42}) in the sequel, according to which:
\begin{equation}\label{old21}
{\overline S_{k,l}} =  {{m}\choose k}{{n}\choose l} - \sum^{k}_{s=1} \sum^{l}_{r=1}  (-1)^{s+r} {{m-s}\choose k-s} {{n-r}\choose l-r}  S_{s,r}.
\end{equation}

\subsection{Gumbel's Bivariate Inequality}
\begin{equation}\label{old22}
P(S\ge 1, T\ge 1) \le {{{m}\choose k}{{n}\choose l} - {\overline S_{k,l}} \over {{m-1}\choose k-1}{{n-1}\choose l-1}}.
\end{equation}

\begin{proof}
When $i\ge 1$,  ${{m-1}\choose k}\ge {{m-i}\choose k}$, so from Combinatorial Identity 1 (in Section 6),
\begin{eqnarray*}
{{m}\choose k}-{{m-i}\choose k}&=&{{m-1}\choose k}+{{m-1}\choose k-1}-{{m-i}\choose k}\\
&\ge& {{m-1}\choose k-1}.\end{eqnarray*}
Therefore, from the proof of Fr\'echet's Bivariate Inequality above, 
\begin{eqnarray*}
{{m}\choose k}{{n}\choose l} - {\overline S_{k,l}}
&=&\sum^{m}_{i=0}\sum^{n}_{j=0} {\bigg\{} {{m}\choose k}-{{m-i}\choose k}{\bigg\}} {\bigg\{} {{n}\choose l}-{{n-j}\choose l} {\bigg\}}P(S=i, T=j)\\
&\ge& \sum^{m}_{i=1}\sum^{n}_{j=1} {{m-1}\choose k-1} {{n-1}\choose l-1}P(S=i, T=j) \\
&=&{{m-1}\choose k-1} {{n-1}\choose l-1} P(S\ge 1, T\ge 1),\end{eqnarray*}
which reduces to (\ref{old22}).
\end{proof}

Note that the numerator in both cases (\ref{old19}) and (\ref{old22}) is (\ref{old20}), as is the case also in the right hand side of (\ref{old30}) below.

\subsection{Monotonicity and Concavity of Fr\'echet's Bounds}
Put
\begin{equation}\label{old23}
L_{k,l} = \frac{ {\big\{} {{m}\choose k}-{{m-S}\choose k} {\big\}} {\big\{} {{n}\choose l}-{{n-T}\choose l} {\big\}} } {{{m}\choose k}{{n}\choose l}}.
\end{equation}
Fix $l$, then
\begin{eqnarray}
L_{k,l}-L_{k+1,l} &= &\frac{ {{n}\choose l}-{{n-T}\choose l} }  { {{n}\choose l} } {\bigg\{} {\bigg (} 1- \frac{ {{m-S}\choose k} } { {{m}\choose k} } {\bigg )} -{\bigg (} 1- \frac{ {{m-S}\choose k+1} } { {{m}\choose k+1} } {\bigg )} {\bigg\}}  \nonumber \\
& = &\frac{ {{n}\choose l}-{{n-T}\choose l} } {{{n}\choose l}} {\bigg\{} \frac{(m-S)!(m-k-1)!} {(m-S-k-1)!m!} - \frac{(m-S)!(m-k)!} {(m-S-k)!m!}{\bigg\}} \nonumber\\
& =&\frac{ {{n}\choose l}-{{n-T}\choose l} } {{{n}\choose l}}  \frac{(m-S)!(m-k-1)!} {(m-S-k-1)!m!}  {{\bigg (} 1- \frac{m-k} {m-S-k} {\bigg )} }.\label{old24}
\end{eqnarray}

Since ${1-{{m-k}\over {m-S-k}}\le 0}$, we get 
\begin{equation}\label{old25}
L_{k,l}\le L_{k+1,l}.
\end{equation}

Taking expectations in both sides of (\ref{old23})  and (\ref{old25}), we have Fr\'echet's bound equal to $E(L_{k,l})$. So Fr\'echet's bound  increases with increasing $k$. Similarly,  Fr\'echet's bound increases with increasing $l.$

To prove concavity, use (\ref{old24}),
\begin{equation}\nonumber
\begin{split}
L_{k,l}-L_{k+1,l}&= \frac{ {{n}\choose l}-{{n-T}\choose l} } {{{n}\choose l}} \frac {(m-S)!(m-k-1)!}{ (m-S-k-1)!m!}  {\bigg (} 1- \frac{m-k} {m-S-k} {\bigg )} \\
&=\frac{ {{n}\choose l}-{{n-T}\choose l} }  {{{n}\choose l}}  \frac{(m-S)!(m-k-2)!} {(m-S-k-2)!m!}  \frac{m-k-1}{m-S-k-1} \frac{-S} {m-S-k}\\
&=\frac{{{n}\choose l}-{{n-T}\choose l} } {{{n}\choose l}}  \frac{(m-S)!(m-k-2)!} { (m-S-k-2)!m!}  \frac{m-k-1} {m-S-k-1} \frac{m-S-k-1} {m-S-k} \frac{-S} {m-S-k-1}\\
&=\frac{ {{n}\choose l}-{{n-T}\choose l} } {{{n}\choose l}}  \frac{(m-S)!(m-k-2)!} {(m-S-k-2)!m!}  \frac{m-k-1}{m-S-k} {\bigg (} 1-\frac{m-k-1} {m-S-k-1} {\bigg )} \\
& = \frac{m-k-1} {m-S-k} (L_{k+1,l}-L_{k+2,l}).
\end{split}
\end{equation}

Since for $S=0$, $L_{k,l}-L_{k+1,l}=0$ for all $k\ge 0$, and for $1\le S\le n$, ${{m-k-1}\over {m-S-k}}\ge 1$ and $L_{k,l}-L_{k+1,l} \le 0$ for all $k\ge 0$, so

$$L_{k,l}-L_{k+1,l} \le L_{k+1,l}-L_{k+2,l}.$$

Therefore, Fr\'echet's bounds is concave in $k$.  We can  prove the concavity with respect to the second parameter $l$ similarly.

\subsection{Monotonicity and Convexity of Gumbel's Bounds}
Gumbel's inequality applies equally to random variables $U=m-S$ and $V=n-T$. So,
\begin{equation}\label{old26}
P(U\ge 1, V\ge 1) \le {E {\big\{} {{m}\choose k}-{{S}\choose k} {\big\}} {\big\{} {{n}\choose l}-{{T}\choose l} {\big\}} \over {{m-1}\choose k-1}{{n-1}\choose l-1}}.
\end{equation}
Denote  the right hand side of (\ref{old26}) as $G_{k,l}$. By a double use of Combinatorial Identity 3 (Section 6):
\begin{equation}\label{old27}
E{{S}\choose k}={{m}\choose k}-\sum^{m}_{i=k}{{i-1}\choose k-1} P(S<i),
\end{equation}
and from our Theorem 2, specifically (\ref{old14}):
\begin{equation}\label{old28}
E{{S}\choose k}{{T}\choose l}=\sum^{m}_{i=k}\sum^{n}_{j=l}{{i-1}\choose k-1}{{j-1}\choose l-1} P(S\ge i, T\ge j),
\end{equation}
we have
\begin{eqnarray*}
&{}& {{m}\choose k}{{n}\choose l}-{{m}\choose k}E{{T}\choose l}-{{n}\choose l}E{{S}\choose k}+E{{S}\choose k}{{T}\choose l}\\
&=&{{m}\choose k}{{n}\choose l}
-  {{m}\choose k} {\bigg\{} {{n}\choose l}-\sum^{n}_{j=l}{{j-1}\choose l-1} P(T<j) {\bigg\}}\\
& - &{{n}\choose l} {\bigg\{} {{m}\choose k}-\sum^{m}_{i=k}{{i-1}\choose k-1} P(S<i) {\bigg\}} \\
&+& \sum^{m}_{i=k}\sum^{n}_{j=l}{{i-1}\choose k-1}{{j-1}\choose l-1} \{1-P(S\ge i, T<j)-P(S< i, T\ge j)-P(S< i, T<j)\} \\
&= &\sum^{m}_{i=k}\sum^{n}_{j=l} {{i-1}\choose k-1}{{j-1}\choose l-1} \{ P(T<j)+P(S<i)-P(S\ge i, T<j)\cr &-&P(S< i, T\ge j)-P(S< i, T<j)\} \\
&= &\sum^{m}_{i=k}\sum^{n}_{j=l} {{i-1}\choose k-1}{{j-1}\choose l-1} P(S< i, T<j) .
\end{eqnarray*}

Therefore,
\begin{eqnarray*}
G_{k,l} &=&\frac{\sum^{m}_{i=k}\sum^{n}_{j=l} {{i-1}\choose k-1}{{j-1}\choose l-1} P(S< i, T<j)} {{{m-1}\choose k-1}{{n-1}\choose l-1}}\\
&=& \sum^{n}_{j=l} \frac{{{j-1}\choose l-1}} {{{n-1}\choose l-1}} \sum^{m}_{i=k} \frac{(i-1)(i-2)...(i-k+1) }{ (m-1)(m-2)...(m-k+1)} P(S< i, T<j).\end{eqnarray*}

Fix $l$, then  we have
\begin{equation}\label{old29}
G_{k,l}- G_{k+1,l}=\sum^{n}_{j=l} \frac{{{j-1}\choose l-1}} {{{n-1}\choose l-1}} \sum^{m}_{i=k} \frac{(i-1)(i-2)...(i-k+1)} {(m-1)(m-2)...(m-k+1)} {\bigg (}1-\frac{i-k} {m-k} {\bigg )} P(S< i, T<j).
\end{equation}
Since $k\le i\le m$, so $G_{k,l}\ge G_{k+1,l}.$ Thus, $G_{k,l}$ is decreasing with increasing $k$.  We can get a similar conclusion for the second parameter $l$.

Put  $D_{k,l}=G_{k,l}- G_{k+1,l}$. Using (\ref{old29}),
$$D_{k,l}- D_{k-1,l}={\sum^{n}_{j=l} {{{j-1}\choose l-1}\over {{n-1}\choose l-1}} } \sum^{m}_{i=k-1} {(i-1)(i-2)...(i-k+2) \over (m-1)(m-2)...(m-k+1)} (m-i) {\bigg (} {{i-k+1}\over {m-k}}-1 {\bigg )}  P(S< i,T<j).$$
Since for $i=m$, $D_{k,l}- D_{k-1,l}=0$ for all $k$, and for $k-1\le i< m$, ${{i-k+1}\over {m-k}}-1\le 0$, so we obtain that $D_{k,l}- D_{k-1,l}\le 0$ for all $k$. So, Gumbel's bound is convex in $k$. We can  conclude similarly the convexity of Gumbel's bound with respect to the second parameter $l$.

\subsection{Comparison}
For fixed $k,l\ge 1$, the Fr\'echet's lower bound (\ref{old19}) and Gumbel's upper bound (\ref{old22}) for $P_{(1,1)}=P(S\ge1, T\ge 1)$, when expressed, using (\ref{old20}), in terms of bivariate moments $S_{i,j}$ are a linear combination of all such moments for $1\le i\le k$, $1\le j\le l$. Although we have shown the monotonicity of these bounds with non-decreasing $(k,l)$, it is appropriate to compare these bounds, for small fixed $(k,l)$, with existing bounds, some of which are optimal in a linear sense.

The linear upper bound 
\begin{equation}\label{c1}
P_{(1,1)}\le S_{1,1}- \frac 2 {n} S_{1,2}- \frac 2 {m} S_{2,1} + \frac 4 {mn} S_{2,2}
\end{equation}
due to Galambos and Xu $(1993)$ is optimal in terms of $\{S_{1,1},S_{1,2},S_{2,1},S_{2,2}\}$ in several senses (see also Seneta and Chen $(1996)$).

The bivariate Gumbel upper bound (\ref{old22}) at $k=l=2$, reads
\begin{equation}\label{c2}
P_{(1,1)}\le S_{1,1}- \frac 1 {n-1} S_{1,2}- \frac 1 {m-1} S_{2,1} + \frac 1 {(m-1)(n-1)} S_{2,2}
\end{equation}
is weaker, in an obvious sense, excepts when $m=n=2$. For the example on p.103 of Chen and Seneta $(1995)$, where $m=n=3$, the right-hand sides of (\ref{c1}) and (\ref{c2}) are respectively 0.888 and 0.891.

A lower bound (Chen and Seneta $(1995)$) for $P_{(1,1)}$ analogous to (\ref{c1}) is 
\begin{equation}\label{c3}
P(S\ge 1, T\ge 1)\ge \frac {4S_{1,1}} {(a+1)(b+1)} - \frac {4S_{1,2}} {b(a+1)(b+1)} - \frac {4S_{2,1}} {a(a+1)(b+1)} + \frac {4S_{2,2}} {ab(a+1)(b+1)} 
\end{equation}
where $a$ and $b$ are integers, providing $m-2a-1\le 0$, $n-2b-1\le 0$. Thus, if we choose $a=m-1, b=n-1$, we obtain
\begin{equation}\label{c4}
P(S\ge 1, T\ge 1)\ge \frac {4} {mn} S_{1,1} - \frac {4} {mn(n-1)} S_{1,2}- \frac {4} {mn(m-1)}S_{2,1} + \frac {4} {mn(m-1)(n-1)} S_{2,2},
\end{equation}
which is Fr\'echet's bivariate lower bound (\ref{old19}).

Note that both (\ref{c1}) and (\ref{c3}) were originally obtained  using inequalities for linear functions of combinatorial quantities on bivariate sample points.

Very recently M\'adi-Nagy and Pr\'ekopa $(2015)$, in somewhat the same tradition as Galambos and Xu $(1993)$, $(1995)$, have looked for coefficients $c_{s,t}, d_{s,t}$, $s+t\le w$ to satisfy
\begin{equation}\label{c5}
\sum_{s=0}^w \sum_{t=0}^{w-s} c_{s,t} S_{s,t} \le r(u,v; m,n) \le \sum_{s=0}^w \sum_{t=0}^{w-s} d_{s,t} S_{s,t} 
\end{equation}
where $w\le min(m,n)$, and $r(u,v; m,n)=P(S=u, T=v)$ or $r(u,v; m,n)=P(S\ge u, T\ge v)$. The authors use a linear programming approach for functions defined on the joint sample space of $(S,T)$ to obtain, after taking expectation, linear bounds for $r(u,v; m,n)$. These linear bounds are optimal in a certain sense.

The constraint in (\ref{c5}) that $s+t\le w$ is justified by the authors (p.25):`` usually the probabilities of intersections are given up to a certain number of counts, hence the multivariate moments up to a certain total order can be calculated.'' However, this makes a little awkward  comparison where the constraint on included bivariate binomial moments is $1\le s \le k, 1\le t \le l$: for example, comparing $w=4$, with $k=l=2$. Nevertheless, the upper bound (3.7) of M\'adi-Nagy and Pr\'ekopa $(2015)$,
\begin{equation}\label{c6}
P_{(1,1)}\le min {\bigg (} S_{1,1}- \frac 2 {mn} S_{1,2}-\frac 2 {m} S_{2,1}, S_{1,1}- \frac 2 {n} S_{1,2}-\frac 2 {mn} S_{2,1} {\bigg )}
\end{equation}
is clearly better than the bound 
\begin{equation}\nonumber
P_{(1,1)}\le min {\bigg (} S_{1,1}- \frac 1 {m-1} S_{2,1}, S_{1,1}- \frac 1 {n-1} S_{1,2} {\bigg )},
\end{equation}
obtained from the cases $k=1,$ $l=2$ and $k=2,$ $l=1$ of the Gumbel upper bound (\ref{old22}).

\section{Fr\'echet-type and Gumbel-type Inequalities}
\begin{theorem}
For $1\le s,k\le m$ and $1\le t,l\le n$, 
\begin{equation}\label{old30}
1-{{\overline S_{k,l}} \over {{m-s+1}\choose k}{{n-t+1}\choose l}} \le P(S\ge s, T\ge t) \le {{{m}\choose k}{{n}\choose l} - {\overline S_{k,l}} \over {\big (} {{m}\choose k}-{{m-s}\choose k} {\big )} {\big (} {{n}\choose l}-{{n-t}\choose l} {\big )}}.
\end{equation}
\end{theorem}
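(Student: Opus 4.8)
The plan is to prove both inequalities by the paper's sample-point method: first establish a pointwise inequality among combinatorial functions of $(S,T)$, then take expectations, exactly as in the proofs of the Fr\'echet and Gumbel bivariate bounds at $s=t=1$. Throughout I use the expectation-of-product identity (\ref{old20}) for the numerator and the definition (\ref{old17}) of $\overline{S}_{k,l}$.

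For the upper bound, recall from (\ref{old20}) that
\[
\binom{m}{k}\binom{n}{l}-\overline{S}_{k,l} = E\left\{\left(\binom{m}{k}-\binom{m-S}{k}\right)\left(\binom{n}{l}-\binom{n-T}{l}\right)\right\}.
\]
Each factor is non-negative, and $\binom{m}{k}-\binom{m-i}{k}$ is non-decreasing in $i$ because $\binom{m-i}{k}$ decreases in $i$. Hence on $\{S\ge s,\,T\ge t\}$ the integrand is at least $\bigl(\binom{m}{k}-\binom{m-s}{k}\bigr)\bigl(\binom{n}{l}-\binom{n-t}{l}\bigr)$, and it is $\ge 0$ everywhere, giving the pointwise bound by the indicator of $\{S\ge s,\,T\ge t\}$. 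Taking expectation and dividing by the (strictly positive, since $k\le m$, $l\le n$, $s,t\ge1$) product of differences yields the stated upper bound. This is a direct generalization of the $s=t=1$ Gumbel argument.

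For the lower bound, write $\overline{S}_{k,l}=E\{f(S,T)\}$ with
\[
f(S,T)=\binom{m}{k}\binom{n-T}{l}+\binom{m-S}{k}\binom{n}{l}-\binom{m-S}{k}\binom{n-T}{l}.
\]
The target is the pointwise inequality
\[
f(S,T)\ge\binom{m-s+1}{k}\binom{n-t+1}{l}\,\mathbf{1}\bigl[\{S<s\}\cup\{T<t\}\bigr],
\]
since taking expectations then gives $\overline{S}_{k,l}\ge\binom{m-s+1}{k}\binom{n-t+1}{l}\bigl(1-P(S\ge s,T\ge t)\bigr)$, which rearranges to the claim. Because $f\ge0$ always, only the case where the indicator equals $1$ needs work, and I would split it. If $S<s$, regroup $f=\binom{m}{k}\binom{n-T}{l}+\binom{m-S}{k}\bigl(\binom{n}{l}-\binom{n-T}{l}\bigr)$ and use $\binom{m-S}{k}\ge\binom{m-s+1}{k}$ together with $\binom{m}{k}\ge\binom{m-s+1}{k}$ to obtain $f\ge\binom{m-s+1}{k}\binom{n}{l}\ge\binom{m-s+1}{k}\binom{n-t+1}{l}$. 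If instead $T<t$, use the symmetric regrouping $f=\binom{m-S}{k}\binom{n}{l}+\binom{n-T}{l}\bigl(\binom{m}{k}-\binom{m-S}{k}\bigr)$ and the analogous bounds to reach the same conclusion.

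The main obstacle is precisely the construction and verification of this pointwise lower bound for $f$: unlike the numerator of the upper bound, $f$ is not a single product, so one must choose the correct regrouping in each of the two cases so that a factor $\binom{m-s+1}{k}$ or $\binom{n-t+1}{l}$ can be extracted and then completed to the full product; once the right factorizations are identified, the verifications reduce to elementary monotonicity of binomial coefficients. A minor point to record is that the bounds are meaningful when the denominators are positive; should a coefficient such as $\binom{m-s+1}{k}$ vanish, the pointwise inequality still holds with right-hand side $0$ and the stated lower bound is interpreted trivially.
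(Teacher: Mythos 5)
Your proof is correct and takes essentially the same route as the paper: the upper bound argument is identical, and since $f(S,T)=\binom{m}{k}\binom{n}{l}-\bigl(\binom{m}{k}-\binom{m-S}{k}\bigr)\bigl(\binom{n}{l}-\binom{n-T}{l}\bigr)$, your pointwise inequality $f(S,T)\ge\binom{m-s+1}{k}\binom{n-t+1}{l}\,\mathbf{1}\bigl[\{S<s\}\cup\{T<t\}\bigr]$ is an equivalent (and slightly cleaner) repackaging of the paper's bound, obtained there by splitting $\sum_{i,j}\bigl(\binom{m}{k}-\binom{m-i}{k}\bigr)\bigl(\binom{n}{l}-\binom{n-j}{l}\bigr)P_{[i,j]}$ over the four regions determined by $i<s$ versus $i\ge s$ and $j<t$ versus $j\ge t$. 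Both arguments rest on the same ingredients: the product representation (\ref{old20}), monotonicity of binomial coefficients, and taking expectation at the end.
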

\begin{proof}

For the lower bound, from (\ref{old20}),

\begin{eqnarray*}
&{}&{{m}\choose k}{{n}\choose l} - {\overline S_{k,l}}\\
&=&\sum^{m}_{i=0}\sum^{n}_{j=0} {\bigg (} {{m}\choose k}-{{m-i}\choose k} {\bigg )} {\bigg (} {{n}\choose l}-{{n-j}\choose l} {\bigg )} P_{[i.j]} \\
&\le & \sum^{s-1}_{i=0}\sum^{t-1}_{j=0} {\bigg (} {{m}\choose k}
-{{m-s+1}\choose k} {\bigg )}  {\bigg (}{{n}\choose l}
-{{n-t+1}\choose l} {\bigg )}P_{[i.j]} \\
&+& \sum^{s-1}_{i=0}\sum^{n}_{j=t} {\bigg (} {{m}\choose k}-{{m-s+1}\choose k} {\bigg )} {{n}\choose l} P_{[i.j]} 
+\sum^{m}_{i=s}\sum^{t-1}_{j=0} {{m}\choose k} {\bigg (} {{n}\choose l}-{{n-t+1}\choose l} {\bigg )} P_{[i.j]}  \\
&+& \sum^{m}_{i=s}\sum^{n}_{j=t} {{m}\choose k}{{n}\choose l} P_{[i.j]} \\  
&\le &{{m}\choose k}{{n}\choose l}- \sum^{s-1}_{i=0}\sum^{t-1}_{j=0} {\bigg [}{{m-s+1}\choose k}{{n}\choose l}+{{m}\choose k}{{n-t+1}\choose l}\\&-&{{m-s+1}\choose k}{{n-t+1}\choose l}{\bigg ]}P_{[i.j]} - \sum^{s-1}_{i=0}\sum^{n}_{j=t} {{m-s+1}\choose k} {{n-t+1}\choose l}P_{[i.j]} \\
&- &\sum^{m}_{i=s}\sum^{t-1}_{j=0} {{m-s+1}\choose k} {{n-t+1}\choose l} P_{[i.j]} \\
& \le &{{m}\choose k}{{n}\choose l}
- \sum^{s-1}_{i=0}\sum^{t-1}_{j=0} {{m-s+1}\choose k}{{n-t+1}\choose l}P_{[i.j]} \\
&-& \sum^{s-1}_{i=0}\sum^{n}_{j=t} {{m-s+1}\choose k} {{n-t+1}\choose l} P_{[i.j]}
- \sum^{m}_{i=s}\sum^{t-1}_{j=0} {{m-s+1}\choose k} {{n-t+1}\choose l} P_{[i.j]} \\
&= &{{m}\choose k}{{n}\choose l} - {{m-s+1}\choose k} {{n-t+1}\choose l} (1- P(S\ge s, T\ge t)),
\end{eqnarray*}

which reduces to the left hand side of (\ref{old30}).

Next, again using (\ref{old20}) as initial step,
\begin{eqnarray*}
{{m}\choose k}{{n}\choose l} - {\overline S_{k,l}}
& \ge &\sum^{m}_{i=s}\sum^{n}_{j=t} {\bigg (} {{m}\choose k}-{{m-i}\choose k} {\bigg )} {\bigg (} {{n}\choose l}-{{n-j}\choose l} {\bigg )} P_{[i.j]} \\
& \ge &\sum^{m}_{i=s}\sum^{n}_{j=t} {\bigg (} {{m}\choose k}-{{m-s}\choose k} {\bigg )} {\bigg (} {{n}\choose l}-{{n-t}\choose l} {\bigg )} P_{[i.j]} \\
&=& {\bigg (} {{m}\choose k}-{{m-s}\choose k} {\bigg )} {\bigg (} {{n}\choose l}-{{n-t}\choose l} {\bigg )} P(S\ge s, T\ge t),\end{eqnarray*}
which reduces to the right hand side of (\ref{old30}). 
\end{proof}

\section{ Monotonicity and Convexity in Chung-type Bounds}

Define 

\begin{equation}\label{old31}
A^{(s,t)}_{k,l} =\frac
{\sum^{k}_{i=s} \sum^{l}_{j=t}  (-1)^{i+j-(s+t)}  {{i-1}\choose i-s} {{m-i}\choose k-i} {{j-1}\choose j-t} {{n-j}\choose l-j} S_{i,j} }  { {{m-s}\choose k-s} {{n-t}\choose l-t}}.
\end{equation}
We follow  Hoppe and Seneta (2012), p.283, from their equation $(43)$, to  prove that $A^{(s,t)}_{k,l}$ is monotone and convex in $k$ and $l$.  For $k = s,  s+1, \ldots, m-1$,

\begin{eqnarray*}
A^{(s,t)}_{k,l} - A^{(s,t)}_{k+1,l} &=&
{\sum^{l}_{j=t} (-1)^{j-t}  {{j-1}\choose j-t} {{n-j}\choose l-j} \over  {{n-t}\choose l-t}  } {\sum^{k+1}_{i=s} (-1)^{i-s}  {{i-1}\choose i-s} 
{\bigg [} { {{m-i}\choose k-i} \over  {{m-s}\choose k-s}  }-  {{{m-i}\choose k+1-i} \over {{m-s}\choose k+1-s}}} {\bigg ]}  S_{i,j} \\
&=& {\sum^{l}_{j=t} (-1)^{j-t}  {{j-1}\choose j-t} {{n-j}\choose l-j} \over  {{n-t}\choose l-t}} 
\sum^{k+1}_{i=s} (-1)^{i-s-1} {s\over {m-s} } { {{i-1}\choose  i-s-1}{{m-i}\choose k+1-i} \over {{m-s-1}\choose k-s} } S_{i,j}\\
&=& {s\over {m-s}} A^{(s+1,t)}_{k+1,l}\ge 0.\end{eqnarray*}

Therefore, using monotonicity in $k$, we have

$${A^{(s,t)}_{k,l} - A^{(s,t)}_{k+1,l} = {s\over {m-s}} A^{(s+1,t)}_{k+1,l}\ge {s\over {m-s}} A^{(s+1,t)}_{k+2,l}=A^{(s,t)}_{k+1,l} - A^{(s,t)}_{k+2,l}
}$$ 
proving convexity in $k$. Analogous results in $l$ follow similarly.

Again, the proof above  is essentially the univariate one from Hoppe and Seneta (2012), p. 283, because of the  product structure of (\ref{old31}), before  taking expectation:

\begin{equation}\label{old32}
A^{(s,t)}_{k,l} =
E{\bigg\{} \frac { \sum^{k}_{i=s}  (-1)^{i-s}  {{i-1}\choose i-s} {{m-i}\choose k-i}{S\choose i} }   {{{m-s}\choose k-s}} \frac{ \sum^{l}_{j=t}  (-1)^{j-t}  {{j-1}\choose j-t} {{n-j}\choose l-j}{T\choose j} }   {{{n-t}\choose l-t}} {\bigg\}}.
\end{equation}

When $k=m$ and $l=n$, we see from Theorem 2, and specifically from (\ref{old13}), that 
 $A^{(s,t)}_{m,n}=P(S\ge s, T\ge t),$ and since, for example, $A^{(s,t)}_{m-1,n} \geq A^{(s,t)}_{m,n}$, by monotonicity, we have  
 
\begin{equation}\label{old33}
P(S\ge s, T\ge t)\leq A^{(s,t)}_{m-1,n}.
\end{equation}
So such (Chung-type)  upper bounds are of the nature of the Gumbel-type upper  bounds as on the right-hand side of (\ref{old30}). Note that the bound in (\ref{old33})  is already expressed as a linear combination  of  bivariate binomial moments $S_{i,j}.$

\section{Combinatorial Identities}
Note that here and throughout this paper, for any real number $d$ and integer $r>0, {{d}\choose r}={d(d-1)...(d-r+1) \over r!}$. If $r=0,{{d}\choose r}=1.$ If $m$ and $r$ are positive integers, and $r> m$, then ${{m}\choose r}=0$.

Identities 1 to 4 occur,  with the same numbering, and are proved,  in Hoppe and Seneta (2012), p. 273. They are stated here for the reader's convenience, since all are used  in this paper.

{\bf Identity 1} (Extended Pascal's Identity) For any real number $d$ and integer $k \geq 1$,
\begin{equation*}
{d \choose k } = {{d-1} \choose {k}}  + {{d-1} \choose {k-1}}.
\end{equation*}

{\bf Identity 2} For integers $n \geq 1, k \geq 0,$
\begin{equation*}
\sum_{x=0}^{k} (-1)^x {n \choose x} = (-1)^k {{n-1} \choose k}. 
\end{equation*}

{ \bf Identity 3} For $k \geq 1, n, k$ integers
\begin{equation*}
{n \choose k} = \sum_{x=k}^n {{x-1}\choose {k-1}}= \sum_{x=0}^n {{x-1}\choose {k-1}}.
\end{equation*}

{\bf Identity 4} For $ n \geq k \geq 1, r \geq 1,$
\begin{equation*}
{n \choose k} = \sum_{j=1}^{r-1} {{n-j}\choose {k-1}} \ + \ {{n-r+1} \choose k}.
\end{equation*}

{\bf Identity 5}
\begin{equation}\label{old38}
{{n-T}\choose l}   = \sum_{r=0}^{l} (-1)^r {{n-r}\choose{l -r}}{ T \choose r}, T=0,1,2, ...n. 
\end{equation}

\begin{proof}
When $l=0,1$, we  see that (\ref{old38}) holds for every $n\ge l$. Assume when $l=k$, (\ref{old38})  holds for every $n\ge l$. By this induction assumption, it holds for $n-1,n-2,\cdots,k+1,k$. Using Identity 1, iterating back,
\begin{eqnarray*}
{{n-T}\choose k+1} &= &{{n-T-1}\choose k} +{{n-T-1}\choose k+1} \\
&=&{{n-T-1}\choose k} +{{n-T-2}\choose k} +...+  {{k+1}\choose k} +{{k+1}\choose k+1} \\
&= & \sum_{N=T}^{n-1}\sum_{r=0}^{k} (-1)^r {{N-r}\choose k-r} {{T}\choose r}\end{eqnarray*}
\noindent by the induction hypothesis; and now, after an exchange of order of summation, by Identity 4: 
\begin{eqnarray*}
\qquad \qquad &=& \sum_{r=0}^{k} (-1)^r  \sum_{N=T}^{n-1}{{N-r}\choose k-r} {{T}\choose r}\\
&=& \sum_{r=0}^{k} (-1)^r {{n-r}\choose k+1-r} {{T}\choose r} + \sum_{r=0}^{k} (-1)^{r+1} {{T-r}\choose k+1-r}{{T}\choose r} \\
&=& \sum_{r=0}^{k} (-1)^r {{n-r}\choose k+1-r} {{T}\choose r} + \sum_{r=0}^{k} (-1)^{r+1}  {{T}\choose k+1}{{k+1}\choose r}.\end{eqnarray*}
So the only thing we need to prove now is that
$$ \sum_{r=0}^{k} (-1)^{r+1}  {{T}\choose k+1}{{k+1}\choose r} = (-1)^{k+1}  {{T}\choose k+1}.$$
From Identity 2 above,
\begin{equation*}
\sum_{r=0}^{k} (-1)^{r+1} {{k+1}\choose r} = (-1)^{k+1}.
\end{equation*}
\noindent  So  we have  the desired conclusion, by induction. 
\end{proof}

We now apply  Identity 5, to express ${\overline S_{l}},{\overline S_{k,l}}$ as linear functions of binomial moments.

 \leftline{\bf 1. Univariate case} 

\bigskip

\noindent Taking expectations in both sides of (\ref{old38}) , we have that,
$$ E{{n-T}\choose l} = \sum_{r=0}^{l} (-1)^r {{n-r}\choose l-r} E{{T}\choose r},$$
which is 
\begin{equation}\label{old40}
{\overline S_{l}}=  \sum_{r=0}^{l} (-1)^r {{n-r}\choose l-r} S_{r} .
\end{equation}

 \leftline{\bf 2. Bivariate case} 
 
 \bigskip

\noindent Using (\ref{old38}), we have 

\begin{equation}\label{old41}
{{m-S}\choose k} {{n-T}\choose l} = \sum_{s=0}^{k} \sum_{r=0}^{l} (-1)^{s+r}  {{m-s}\choose k-s} {{n-r}\choose l-r}  {{S}\choose s}{{T}\choose r} .
\end{equation}
Take expectations in both sides, we have
$$ E{{m-S}\choose k} {{n-T}\choose l} = \sum_{s=0}^{k} \sum_{r=0}^{l} (-1)^{s+r}  {{m-s}\choose k-s} {{n-r}\choose l-r} S_{s,r} .$$
Thus, 
\begin{eqnarray}
{\overline S_{k,l}}&=& E {{m-S}\choose k}{n \choose l} +E { {n-T}\choose l}{m \choose k} - E {{m-S}\choose k} {{n-T}\choose l} \nonumber\\
&= &\sum_{s=0}^{k} (-1)^{s}  {{m-s}\choose k-s} {{n}\choose l} S_{s} +\sum_{r=0}^{l} (-1)^{r}  {{n-r}\choose l-r} {{m}\choose k} S_{r} \nonumber \\
&- &\sum_{s=0}^{k} \sum_{r=0}^{l} (-1)^{s+r}  {{m-s}\choose k-s} {{n-r}\choose l-r} S_{s,r} \nonumber\\
&=& {{m}\choose k}{{n}\choose l} - \sum^{k}_{s=1} \sum^{l}_{r=1}  (-1)^{s+r} {{m-s}\choose k-s} {{n-r}\choose l-r}  S_{s,r} \label{old42}.
\end{eqnarray}
\section {Conclusion}

Self-contained proofs of Meyer's (1969) results have been followed by the introduction 
 for the first time of  bivariate versions of the Fr\'echet, Gumbel and Chung inequalities, and demonstration of  their monotonicity and convexity properties. The method of proof has been via combinatorial identities, in a relatively simple  manner which departs from the usual ``events" setting for Bonferroni-type inequalities.
This has completed the study of bivariate versions of the known {\it linear} inequalities studied in Hoppe and Seneta (2012).  A study of bivariate extension of  the univariate {\it quadratic}  inequalities studied in that paper  is in progress.

\centerline{\bf Appendix}

We first provide an alternative rationale for using (\ref{old17}) as the ``correct" definition of ${\overline S_{k,l}}.$

As noted in our \S 1, in the univariate ``events" setting, the numerator on the right-hand side of Gumbel's inequality (\ref{old3}) for $P(S \geq 1)$ can be written as

\begin{equation*}
{{{m}\choose k} - {\overline S}_k} =\sum_{i\in I_{k,m}}P(A_{i_1}\cup A_{i_2} \cup ... \cup A_{i_k}).
\end{equation*}

This is also the numerator of the Gumbel-type upper bound for $P(S \geq s)$, from Hoppe and Seneta (2012), \S 5.2.
It is then plausible that in the bivariate generalization, in the ``events"  situation, the numerator of the Gumbel bound for $P(S \geq s,  T \geq t)$ which is just notation for $P(S \geq s \cap  T \geq t)$ should have numerator 
\begin{equation}\label{old44}
{{{m}\choose k}{{n}\choose l} - {\overline S_{k,l}} } = \sum_{ i\in I_{k,m}} \sum_{ j\in J_{l,n}} P((A_{i_{1}} \cup A_{i_{2}}\cup ...\cup A_{i_{k}})\cap ( B_{j_{1}} \cup B_{j_{2}}\cup ...\cup B_{j_{l}})).
\end{equation}

Taking this as the definition of ${\overline S_{k,l}}$,  probability manipulation of the right-hand side of  (\ref{old44}) gives
\begin{equation*}
 {\overline S_{k,l}} = \sum_{ i\in I_{k,m}} \sum_{ j\in I_{l,n}} [ P({\overline A_{i_{1}}}, ..., {\overline A_{i_{k}}}) + P({\overline B_{j_{1}}}, ..., {\overline B_{j_{l}}}) -P({\overline A_{i_{1}}}, ..., {\overline A_{i_{k}}};  {\overline B_{j_{1}}}, ..., {\overline B_{j_{l}}}) ].
\end{equation*}
Thus in order to see that (\ref{old17}) holds, in this ``events" setting, we only need to prove that 
 $$ E{{m-S}\choose k} {{n-T}\choose l}= \sum_{ i\in I_{k,m}} \sum_{ j\in I_{l,n}} P({\overline A_{i_{1}}}, ..., {\overline A_{i_{k}}}; {\overline B_{j_{1}}}, ..., {\overline B_{j_{l}}}).
 $$
 We can  use the method of indicators, as for example in page 292 of Hoppe and Seneta (2012) to do  this (Replace $M_{m}$ with $m-U_{m}$ and $n-V_{n}$ respectively there,  multiply the two items and then take expectation on both sides).
 
 To conclude this section, we point out that in univariate theory of sets, Boole's Inequality, using the notation (\ref{old1}):
$$P(S\ge 1)\le S_{1}$$
is the first upper Bonferroni Inequality. From (\ref{old2}), the right-hand side equals ${E{{S}\choose 1}}=ES$, to which the right-hand side of Gumbel's Inequality (\ref{old3}) reduces at $k=1$. Next, from (\ref{old15}) the first upper Bonferroni Inequality for two sets using notation (\ref{old5}), is:

 $$P(S\ge 1, T\ge 1)\le S_{1,1}=E(ST)$$
 from (\ref{old4}).

The proposed inequality (\ref{old22}) for general $(S,T)$ with $k=l=1$ on the right hand side gives for the bound
 
$$ \frac{E{\big\{} {\big\{} {{m}\choose 1} - {{m-S}\choose 1} {\big\}} {\big\{} {{n}\choose 1} - {{n-T}\choose 1} {\big\}} {\big\}} }   {{{m-1}\choose {1-1}} {{n-1}\choose {1-1}}}=E(ST),$$
which supports, in this sense, (\ref{old22}) as an appropriate bivariate binomial generalization of (\ref{old3}).

\end{document}